\theoremstyle{plain}
\newtheorem{theorem}{Theorem}[section]
\newtheorem{proposition}[theorem]{Proposition}
\newtheorem{lemma}[theorem]{Lemma}
\newtheorem{corollary}[theorem]{Corollary}
\theoremstyle{definition}
\newtheorem{definition}[theorem]{Definition}
\newtheorem{example}[theorem]{Example}
\renewenvironment{proof}[1][\proofname]{\par
  \normalfont
  \topsep6\p@\@plus6\p@ \trivlist
  \item[\hskip\labelsep{\bfseries #1}\@addpunct{\bfseries.}]\ignorespaces
}{%
  \endtrivlist
}
\renewcommand{\proofname}{proof}
\theoremstyle{remark}
\newtheorem{remark}[theorem]{Remark}
\numberwithin{equation}{section}
\title{Motivic classifying $\infty$-topoi and spectral stacks}
\date{\today} 
\author{Yuki Kato}
\thanks{The author was supported by Grants-in-Aid for Scientific
Research No.26610010 and 23K03080, Japan Society for the Promotion of Science.}
\address{National institute of technology, Kurume college, 
	      1-1-1, Komorino, Kurume, Fukuoka, 830-8555 JAPAN.}
\email{kato\_051@kurume-nct.ac.jp}
\subjclass[2020]{18G55 (primary), 14F42, 14A30, 18N60 (secondary)}
\keywords{Model categories, $\A^1$-homotopy theory, Derived algebraic geometry.}
\newcommand{\A}{\mathbb{A}}
\newcommand{\E}{\mathbb{E}}
\newcommand{\proj}{\mathbb{P}}
\newcommand{\MSpec}{\mathrm{MSpc}}
\newcommand{\Hom}{\mathrm{Hom}}
\newcommand{\Mot}[2]{\mathrm{Mot}^{#2}_{#1}}
\newcommand{\sSet}{\mathrm{Set}_{\Delta}}
\newcommand{\sCat}{\mathrm{Cat}_{\Delta}}
\newcommand{\Set}[1]{\mathrm{Set}_{#1}}
\newcommand{\Cat}[1]{\mathrm{Cat}_{#1}}
\newcommand{\MCat}[1]{\mathrm{MCat}_{#1}}
\newcommand{\MMod}{\mathfrak{Mod}}
\newcommand{\MModT}{\mathfrak{Mod}^\otimes}
\newcommand{\MPMod}{\mathfrak{PMod}}
\newcommand{\MPModT}{\mathfrak{PMod}^\otimes}
\newcommand{\Sym}{\mathrm{Sym}}
\newcommand{\Map}{\mathrm{Map}}
\newcommand{\Mod}{\mathrm{Mod}}
\newcommand{\CAlg}{\mathrm{CAlg}}
\newcommand{\Fun}{\mathrm{Fun}}
\newcommand{\PMod}{\mathrm{PMod}}
\newcommand{\Sm}{\mathbf{Sm}}
\newcommand{\MS}{\mathbf{MS}}
\newcommand{\MSp}{\mathrm{MSp}}
\newcommand{\LMTop}{{}^{\mathrm{L}}\mathfrak{MTop}}
\newcommand{\RMTop}{{}^{\mathrm{R}}\mathfrak{MTop}}
\newcommand{\bysame}{\leavemode\hbox to3em{\hrulefill}\,}
\begin{document}
\begin{abstract}
In this paper, we develop "motivic derived algebraic geometry," an enhancement of derived algebraic geometry for the $\mathbb{A}^1$-homotopy theory of Morel and Voevodsky. We formulate motivic versions of $\infty$-categories, $\infty$-topoi, spectral schemes, and spectral Deligne-Mumford stacks based on the theories of Joyal, Lurie, Toën, and Vezzosi. As the main application, we establish the existence of the motivic stackification functor. This functor is constructed as the left adjoint of the pullback along the geometric morphism between classifying motivic $\infty$-topoi. 
\end{abstract}

\maketitle
\section{Introduction}
\label{sec:introduction}

Derived algebraic geometry studies geometric objects by allowing sheaves
of functions to take values in homotopical or higher-categorical objects
rather than in ordinary sets.  In the work of Lurie~\cite{DAGV,DAGVII}
and To\"en--Vezzosi~\cite{HAGI}, spectral schemes and spectral
Deligne--Mumford stacks are formulated in terms of structured
$\infty$-topoi and their classifying $\infty$-topoi.  From this
viewpoint, the passage from affine or algebraic objects to stacks is
realized as a universal construction in the $\infty$-categorical setting.

The purpose of this paper is to develop a motivic analogue of this
framework.  We work over a Grothendieck site $\mathcal{X}$ equipped with
an interval object $I$; the principal example is the Nisnevich site of
smooth schemes over a base scheme, with an interval $\A^1$.  Given a left
proper combinatorial simplicial model category $\mathbf{M}$, we construct
a motivic model category $\Mot{\mathcal{X}}{I}(\mathbf{M})$ by imposing
both descent for the Grothendieck topology and invariance with respect
to the interval object.  This construction recovers the usual model
category of motivic spaces of Morel--Voevodsky~\cite{MV} when
$\mathbf{M}$ is the model category of simplicial sets, and it also
applies to models of $\infty$-categories and $\infty$-bicategories.

Using these motivic model categories, we define motivic
$\infty$-categories, motivic $\infty$-topoi, and motivic classifying
$\infty$-topoi.  The bicategorical part of this construction relies on
Lurie's model of $\infty$-bicategories, which utilizes scaled simplicial sets and 
the scaled straightening and unstraightening theorem~\cite[p.128,
Theorem 3.8.1]{GI}.  This theorem facilitates the transition between motivic
functors valued in $\infty$-categories and motivic locally coCartesian
fibrations over the corresponding base.
Since motivic $\infty$-topoi are defined fiberwise by ordinary
$\infty$-topoi, they inherit the universal-colimit property: colimits in
each fiber are stable under pullback and compatible with motivic
restriction functors.  This observation also motivates the treatment of
geometric objects defined by torsors.  The stability of torsors under
base change is a formal consequence of universality in an
$\infty$-topos, while the $\A^1$-invariance of their classification is a motivic condition that requires additional hypotheses on the
acting group object.

The main result of the paper is Theorem~\ref{spec}.  It states that a
geometric morphism
$f:\mathcal{K}\to\mathcal{K}'$ of motivic classifying $\infty$-topoi,
compatible with the relevant geometric structures, induces a pullback
functor
\[
 f^{-1}:\LMTop(\mathcal{K}')\to \LMTop(\mathcal{K})
\]
between the motivic $\infty$-categories of structured topoi, and that
this functor admits a left adjoint relative to the underlying motivic
$\infty$-topos.  Thus, the stackification procedure familiar from derived
algebraic geometry has a motivic counterpart.

As an application, we introduce motivic spectral schemes and motivic
spectral Deligne--Mumford stacks.  The motivic Zariski and motivic
\'etale classifying $\infty$-topoi are defined by imposing motivic
analogues of the usual local and strictly Henselian local conditions on
commutative algebra objects.  Applying Theorem~\ref{spec} to the
geometric morphism from the motivic discrete classifying topos to the
motivic \'etale classifying topos yields a functor from motivic
algebraic spaces to motivic stacks with the expected universal property.
In particular, any motivic algebraic space $\mathbb{X}$ is 
associated with a motivic stack $\mathbb{X}^{\text{\'et}}$.

The paper is organized as follows.  In Section~\ref{sec:MMC}, we define
motivic model categories and the associated motivic $\infty$-categories.
In Section~\ref{sec:MDAG}, we recall the scaled straightening and
unstraightening formalism, constructing motivic classifying
$\infty$-topoi; the proof of Theorem~\ref{spec} provided therein.  In
Section~\ref{sec:MAG}, we apply the theorem to motivic spectral schemes
and motivic spectral Deligne--Mumford stacks, concluding with the existence
of associated motivic stacks.

\section{Motivic model categories}
\label{sec:MMC}
In this section, we define motivic model categories associated with a
Grothendieck site $\mathcal{X}$ equipped with an interval object.  An
{\it interval} object $I$ of $\mathcal{X}$ is a triple
$(\mu : I \times I \to I,\, i_0,\,i_1: * \to I)$ satisfying
\[
 \mu \circ( i_0 \times \mathrm{id}) =
 \mu \circ( \mathrm{id} \times i_0 ) = i_0 \circ \mathbf{1}
\]
and
\[
 \mu \circ( i_1 \times \mathrm{id}) =
 \mu \circ( \mathrm{id} \times i_1 ) = \mathrm{id}_I,
\]
where $\mathbf{1}: I \to *$ denotes the canonical map.

\subsection{Definition of the motivic model structure of a left proper
combinatorial simplicial model category.}
\label{MSS}
Let $\mathcal{X}$ be a Grothendieck site with an interval object $I$.
We assume that $\mathcal{X}$ has enough points such that 
a morphism $f:X \to Y$ in $\mathcal{X}$ is an isomorphism if
$f_x:X_x \to Y_x$ is an isomorphism of sets for any point
$x:* \to \mathcal{X}$, where the functor
$(-)_x: \mathcal{X} \to \mathit{Sets}$ denotes the right adjoint of the
induced functor $x_*: \mathit{Sets} \to \mathcal{X}$.  A simplicial
object $U_\bullet : \Delta^{\rm op} \to \mathcal{X}$ with an augmentation
$\pi:U_\bullet \to X$ is a {\it hypercover} of $X$ if it satisfies the
following conditions:
\begin{itemize}
\item For any $n \ge 0$, $U_\bullet([n])$ is a coproduct of compact
objects represented by small objects of $\mathcal{X}$.
\item The augmentation $\pi:U_\bullet \to X$ is a stalkwise trivial Kan
fibration; that is, $\pi_x:U_{x,\,\bullet} \to *$ is a trivial Kan
fibration for any point $x:* \to X$.
\end{itemize}

Let $\mathbf{M}$ be a left proper combinatorial simplicial model
category, and let $\mathbf{M}^{\mathcal{X}^{\rm op}}$ denote the category
of $\mathbf{M}$-valued presheaves on $\mathcal{X}$, equipped with the
projective model structure.  This model structure is again left proper,
combinatorial, and simplicial.  Let
$\Mot{\mathcal{X}}{I}(\mathbf{M})$ denote the Bousfield localization of
$\mathbf{M}^{\mathcal{X}^{\rm op}}$ defined as follows.  The
cofibrations of $\Mot{\mathcal{X}}{I}(\mathbf{M})$ are the cofibrations
of $\mathbf{M}^{\mathcal{X}^{\rm op}}$.  An $\mathbf{M}$-valued presheaf
$F$ on $\mathcal{X}$ is {\it $\mathcal{X}$-local} if $F$ is fibrant in
$\mathbf{M}^{\mathcal{X}^{\rm op}}$ and the induced map
$F(\pi):F(X) \to F(|U_\bullet|)$ is a weak equivalence in $\mathbf{M}$
for any hypercover $\pi:U_\bullet \to X$ with $X \in \mathcal{X}$.
Here, the functor
$|-|:\Fun(\Delta^{\rm op},\,\mathcal{X}) \to \mathcal{X}$ denotes
the geometric realization of simplicial objects.  Furthermore, $F$ is
{\it $I$-local} if the canonical map $\mathbf{1}:I \to *$ induces a weak
equivalence $F(U) \to F(U \times I)$ in $\mathbf{M}$ for any
$U \in \mathcal{X}$.  We say that $F$ is
{\it motivic $\mathbf{M}$-local} if it is both $\mathcal{X}$-local and
$I$-local.  A map $f:F \to G$ of $\mathbf{M}$-valued presheaves on
$\mathcal{X}$ is a {\it motivic $\mathbf{M}$-equivalence} if the induced
map
\[
 f^*:\Hom(G,\,Z) \to \Hom(F,\,Z)
\]
is a weak homotopy equivalence of simplicial sets for any motivic local
$\mathbf{M}$-valued presheaf $Z$.  We call the resulting model structure
on $\Mot{\mathcal{X}}{I}(\mathbf{M})$ the {\it motivic
$\mathcal{X}$-model structure} of $\mathbf{M}$.

By \cite[p.56, Corollary 4.55]{MR2771591}, the projective model
structure on $\mathbf{M}^{\mathcal{X}^{\rm op}}$ is left proper,
combinatorial, and symmetric monoidal. Therefore, the iterated
Bousfield localization $\Mot{\mathcal{X}}{I}(\mathbf{M})$ of
$\mathbf{M}^{\mathcal{X}^{\rm op}}$ is also left proper.  Moreover, by
\cite[p.54, Proposition 4.47]{MR2771591}, the Bousfield localization
$\Mot{\mathcal{X}}{I}(\mathbf{M})$ is a symmetric monoidal localization
of $\mathbf{M}^{\mathcal{X}^{\rm op}}$. Hence, 
$\Mot{\mathcal{X}}{I}(\mathbf{M})$ is also a symmetric monoidal model
category:
\begin{theorem}
\label{M-Model}
Let $\mathbf{M}$ be a left proper combinatorial simplicial model
category, and let $\mathcal{X}$ be a Grothendieck site with an interval
object $I$.  Assume that $\mathcal{X}$ has enough points.  Then there is
a left proper combinatorial simplicial model structure on
$\Mot{\mathcal{X}}{I}(\mathbf{M})$ determined by the following data:
\begin{itemize}
 \item[(C)] Cofibrations are pointwise cofibrations.
 \item[(W)] Weak equivalences are motivic $\mathbf{M}$-weak equivalences.
 \item[(F)] Fibrations are morphisms having the right lifting property
             with respect to all morphisms that are both cofibrations
             and motivic $\mathbf{M}$-weak equivalences.
\end{itemize}
Furthermore, if $\mathbf{M}$ is a symmetric monoidal model category,
then $\Mot{\mathcal{X}}{I}(\mathbf{M})$ is also a symmetric monoidal
model category. \qed
\end{theorem}

Write $\MS_\infty^I=\Mot{\mathcal{X}}{I}(\sSet)_\infty$ and
$\MCat{\infty}^I=\Mot{\mathcal{X}}{I}(\sSet^+)_\infty$.  We refer to
$\MS_\infty^I$ as the $\infty$-category of {\it motivic spaces} and to
$\MCat{\infty}^I$ as the $\infty$-category of
{\it motivic $\infty$-categories}.  We now give a more explicit
description of motivic spaces and motivic $\infty$-categories.  By the
straightening and unstraightening theorem~\cite[p.74, Theorem 2.2.1.2]{HT},
we have a Quillen adjunction
\[
  \mathrm{St}_\mathcal{X}: (\sSet)_{/ N(\mathcal{X})}
  \rightleftarrows \sSet^{\mathcal{X}^{\rm op}} :
  \mathrm{Un}_{\mathcal{X}},
\]
where the model structure on the left-hand side is the contravariant
model structure~\cite[p.71, Remark 2.1.4.12]{HT}, and the model
structure on the right-hand side is the projective model structure
associated with the Kan--Quillen model structure on $\sSet$.  If $X$ is
a motivic space, then there exists a right fibration
$p_X:\overline{X} \to N(\mathcal{X})$ such that, for any
$U \in \mathcal{X}$, the fiber
$\overline{X} \times_{N(\mathcal{X})} U$ is homotopy equivalent to the
Kan complex $X(U)$ and satisfies $X(U \times I)\simeq X(U)$.
Similarly, a motivic $\infty$-category $\mathcal{C}$ is represented by
an $\sSet^+$-valued presheaf on $\mathcal{X}$ satisfying
$\mathcal{C}(U \times I) \simeq \mathcal{C}(U)$ for any
$U \in \mathcal{X}$.  In addition, $\MCat{\infty}^I$ is the full
subcategory of
$\Fun(\mathcal{X}^{\rm op},\,\Cat{\infty})$ spanned by $I$-local
objects.  By the straightening and unstraightening theorem for marked
simplicial sets with the Cartesian model structure~\cite[p.169,
Theorem 3.2.0.1]{HT}, we have a Quillen adjunction
\[
  \mathrm{St}^+_\mathcal{X}: (\widehat{\sSet^+})_{/ N(\mathcal{X})}
  \rightleftarrows (\widehat{\sSet^+})^{\mathcal{X}^{\rm op}}
  : \mathrm{Un}^+_{\mathcal{X}}
\]
between left proper combinatorial simplicial model categories.  Thus
there exists a Cartesian fibration
$p_\mathcal{C}:\overline{\mathcal{C}} \to N(\mathcal{X})$ such that,
for any $U \in \mathcal{X}$, the fiber
$\overline{\mathcal{C}} \times_{N(\mathcal{X})} U$ is weakly equivalent
to the $\infty$-category $\mathcal{C}(U)$ and satisfies
$\mathcal{C}(U \times I) \simeq \mathcal{C}(U)$.

\subsection{An $\infty$-category structure of motivic $\infty$-categories}

For any left proper combinatorial Cartesian closed simplicial monoidal
model category $\mathbf{M}$, the constant functor
\[
      \mathbf{M} \to \Mot{\mathcal{X}}{I}(\mathbf{M})
\]
is a left Quillen functor.  Its right adjoint is the global section
functor, namely the limit over $\mathcal{X}$.  Writing the constant
functor as $-\otimes \mathbf{1}$, we obtain the Quillen adjunction
\[
  - \otimes \mathbf{1}  :\mathbf{M}  \rightleftarrows
 \Mot{\mathcal{X}}{I}(\mathbf{M})  :
 \Map_{\widehat{\MCat{\infty}}}( \mathbf{1},\, - ).
\]
Consider the case $\mathbf{M}= (\sSet^+)_{/\Delta^0}$.  Let
$\MCat{\infty}$ denote the underlying $\infty$-category of
$\Mot{\mathcal{X}}{I}(\mathbf{M})$.

\subsection{Motivic $\infty$-categories as simplicial category functors}

\begin{definition}[\cite{HT} p.20, Definition \texorpdfstring{$1.1.5.3$}{1.1.5.3}]
Let $n$ be a non-negative integer.  For the standard $n$-simplex
$\Delta^n$, we define the simplicial category $\mathfrak{C}[\Delta^n]$
as follows:
\begin{itemize}
 \item The objects of $\mathfrak{C}[\Delta^n]$ are the objects of the
       category $[n]=\{0 \to 1 \to \cdots \to n \}$.
 \item For $0 \le i,\,j \le n$, the set of morphisms is defined by
\[
\Hom_{\mathfrak{C}[\Delta^n]}(i,\,j)=\begin{cases}
\emptyset &(j<i), \\
N(P_{i,\,j}) &(i \leq j),
\end{cases}
\]
where $P_{i,\,j}$ denotes the partially ordered set
$\{I \subset [n] \mid i, j \in I, I \subset \{i,\,\ldots , j \} \}$,
ordered by inclusion.
\end{itemize}
\end{definition}
The functor
\[
  \Delta \times \mathcal{X} \ni ([n],\,U) \mapsto
  (i_U)_*( \mathfrak{C}[\Delta^n]) \in (\sCat)^{\mathcal{X}^{\rm op}},
\]
where $(i_U)_*( \mathfrak{C}[\Delta^n]) : \mathcal{X}^{\rm op} \to
\sCat$ is the left Kan extension of
$\mathfrak{C}[\Delta^n]:\{U\} \to \sCat$ along the inclusion
$i_U:\{U\} \to \mathcal{X}$, induces an adjunction
\[
  \mathfrak{C}_\mathcal{X}: \Set{\Delta_\mathcal{X}}
  \rightleftarrows (\sCat)^{\mathcal{X}^{\rm op}} :N_{\Delta_\mathcal{X}}.
\]
The right adjoint $N_\Delta(\mathcal{C})$ of a functor
$\mathcal{C}:\mathcal{X}^{\rm op} \to \sCat$ is defined by
\[
 \Hom_{ \Set{\Delta_\mathcal{X}}}(\Delta^n \times U , N_\Delta (\mathcal{C})) =
\Hom_{ (\sCat)^{\mathcal{X}^{\rm op}}}( (i_U)_*( \mathfrak{C}[\Delta^n]),\, \mathcal{C}) = \Hom_{ (\sCat)^{\mathcal{X}^{\rm op}}}( \mathfrak{C}[\Delta^n],
\mathcal{C}(U))
\]
for each $n \ge 0$.

The category $\Cat{\Delta_\mathcal{X}}$ carries the model structure
induced by the motivic model structure on $\Set{\Delta_\mathcal{X}}$.
The motivic model structures on $\Set{\Delta_\mathcal{X}}$ and
$(\sCat)^{\mathcal{X}^{\rm op}}$ are chosen so that the functors
\[
    (\sCat)^{\mathcal{X}^{\rm op}}  \overset{N_\Delta}{\leftarrow}
    \Set{\Delta_\mathcal{X}} \overset{(-)^\flat}{\rightarrow}
    (\sSet^+)^{\mathcal{X}^{\rm op}}
\]
are left Quillen functors.
\begin{definition}
Let $(-)^\flat: \sSet \to \sSet^+$ denote the functor defined by
$X^\flat=(X,\,s_0(X))$, where $s_0(X)$ is the collection of degenerate
edges of $X$.  Let $f:F \to G$ be a morphism of simplicial sheaves on
$\mathcal{X}$.
\begin{itemize}
 \item[(C)] We say that $f$ is a {\it motivic cofibration} if
 $f^\flat:F^\flat \to G^\flat$ is a motivic cofibration in
 $(\sSet^+)^{\mathcal{X}^{\rm op}}$.
 \item[(W)] We say that $f$ is a {\it motivic equivalence} if
 $f^\flat:F^\flat \to G^\flat$ is a motivic equivalence in
 $(\sSet^+)^{\mathcal{X}^{\rm op}}$.
\end{itemize}
\end{definition}

\begin{definition}
Let $\mathfrak{C}_\mathcal{X}: \Set{\Delta_\mathcal{X}} \to
(\sCat)^{\mathcal{X}^{\rm op}}$ denote the functor defined above.
Let $f:\mathcal{C} \to \mathcal{D}$ be a natural transformation between
$\sCat$-valued presheaves on $\mathcal{X}$.
\begin{itemize}
 \item[(W)] We say that $f$ is a {\it motivic equivalence} if
 $N_{\Delta_\mathcal{X}}(f)$ is a motivic equivalence in
 $\Set{\Delta_\mathcal{X}}$.
 \item[(F)] We say that $f$ is a {\it motivic fibration} if
 $N_{\Delta_\mathcal{X}}(f)$ is a motivic fibration in
 $\Set{\Delta_\mathcal{X}}$.
\end{itemize}
\end{definition}

\begin{theorem}
The displayed left Quillen functors
\[
   (\sCat)^{\mathcal{X}^{\rm op}}  \overset{N_\Delta}{\leftarrow}
   \Set{\Delta_\mathcal{X}} \overset{(-)^\flat}{\rightarrow}
   (\sSet^+)^{\mathcal{X}^{\rm op}}
\]
are Quillen equivalences.
\end{theorem}
\begin{proof}
This follows from results of Bergner and Lurie~\cite[p.89,
Theorem 2.2.5.1; p.164, Theorem 3.1.5.1; and p.826,
Remark A.2.8.6]{HT}. \qed
\end{proof}

\subsection{The Yoneda lemma for motivic $\infty$-categories}

Let $\mathcal{C}$ be a motivic $\infty$-category, and let
$p:\overline{\mathcal{C}} \to N(\mathcal{X})$ be the Cartesian fibration
associated with $\mathcal{C}$.  For each $n \ge 0$, an
$N(\mathcal{X})$-morphism
$\Delta^n \times N(\mathcal{X}) \to \overline{\mathcal{C}}$ determines
an $n$-simplex in the fiber $\mathcal{C}(U)$ for any object
$U \in \mathcal{X}$.  Let $\mathrm{Ob}(\mathcal{C})$ denote the set of
$N(\mathcal{X})$-morphisms
$\Delta^0 \times N(\mathcal{X}) \to \overline{\mathcal{C}}$, and let
$\mathrm{Ar}(\mathcal{C})$ denote the set of morphisms
$\Delta^1 \times N(\mathcal{X}) \to \overline{\mathcal{C}}$.  We call
their elements the motivic objects and motivic morphisms of
$\mathcal{C}$, respectively.  The constant $\Cat{\infty}$-valued sheaf
$\mathcal{S}_\infty$ on $\mathcal{X}$ determines the constant Cartesian
fibration $\mathcal{S}_\infty \times N(\mathcal{X}) \to N(\mathcal{X})$.
We regard $N(\mathcal{X}) \times \mathcal{S}_\infty$ as the motivic
$\infty$-category of spaces.

\begin{definition}
\label{mapping}
Let $\mathcal{C}$ be a motivic $\infty$-category.  For two objects
$x,\,y: \Delta^0\times N(\mathcal{X}) \to \overline{\mathcal{C}}$, the
mapping space $\Map_\mathcal{C}(x,\,y)$ is defined to be the homotopy
limit
\[
     \{x\}   \times  N(\mathcal{X}) \times_{ \overline{\mathcal{C}} }   \mathrm{Ar}(\mathcal{C}) \times_{ \overline{\mathcal{C}} }   \{y\}   \times  N(\mathcal{X}).
\]
\end{definition}

We now define the Yoneda functor for motivic $\infty$-categories.  A
motivic $\infty$-category is said to be small if it is termwise small.
Let $p:\overline{\mathcal{C}} \to N(\mathcal{X})$ be the Cartesian
fibration determined by a small motivic $\infty$-category $\mathcal{C}$.
For any $N(\mathcal{X})$-morphism
$q:K \to \overline{\mathcal{C}}$, the overcategory
$r:\overline{\mathcal{C}}_{/q} \to \overline{\mathcal{C}}$ is a
Cartesian fibration.  For any point
$y \times N(\mathcal{X}):\Delta^0 \times N(\mathcal{X}) \to
\overline{\mathcal{C}}$, the fiber of
$r:\overline{\mathcal{C}}_{/y \times N(\mathcal{X})} \to
\overline{\mathcal{C}}$ is a Cartesian fibration and, therefore, a motivic
space.  Therefore, by the unstraightening
\[
\mathrm{Un}_{ \overline{\mathcal{C}}}:
({\sSet^+}_{/\mathcal{X}})_{/\overline{\mathcal{C}}} \to
((\sSet^+)_{/\mathcal{X}})^{\mathcal{C}^{\rm op}},
\]
we obtain a functor
\[
  \mathrm{Un}_{ \overline{\mathcal{C}}}
  (\overline{\mathcal{C}}_{/y \times N(\mathcal{X})})
  : \mathcal{C}^{\rm op} \to {\sSet^+}_{/N(\mathcal{X})},
\]
whose values at points
$\Delta^0 \times N(\mathcal{X}) \to \overline{\mathcal{C}}$ are motivic
spaces. We define
$\mathbf{Y}(y)=\mathrm{Un}_{ \overline{\mathcal{C}}}
(\overline{\mathcal{C}}_{/y \times N(\mathcal{X})})$ as the
{\it (motivic) Yoneda functor} of
$y \times N(\mathcal{X}): \Delta^0 \times N(\mathcal{X}) \to
\overline{\mathcal{C}}$.

\begin{lemma}
\label{MYoneda}
Let $\mathcal{C}$ be a small motivic $\infty$-category.  Then, for any
functor $f: \mathcal{C}^{\rm op} \to (\sSet)_{/\mathcal{X}}$, the
following square
\[
 \xymatrix@1{
\mathcal{C} \ar[rrr]^f \ar[d]_{\mathbf{Y}} && &    {\sSet}_{/N(\mathcal{X})}  \ar[d]^{i} \\
  ( {\sSet}_{/N(\mathcal{X})} )^{\mathcal{C}^{\rm
op}} \ar[rrr]_{\Map_{({\sSet}_{/N(\mathcal{X})} )^{\mathcal{C}^{\rm
op}} }(-,\,f)} & & &    {\widehat{{\sSet}}}_{/N(\mathcal{X})}
}
\]
is homotopically commutative, where $i$ denotes the canonical inclusion
from small simplicial sets to large simplicial sets over
$N(\mathcal{X})$.
\end{lemma}
\begin{proof}
For any object $U \in \mathcal{X}$, the fiber of the square is
homotopically commutative by Lurie's strong Yoneda lemma~\cite[p.461,
Proposition 5.5.2.1]{HT}.  Hence the square itself is homotopically
commutative. \qed
\end{proof}

\begin{corollary}
For any pair of points
$x,\,y: \Delta^0 \times N(\mathcal{X}) \to \mathcal{C}$, the mapping
spaces $\Map_{\mathcal{C}}(x,\,y)$ and
\[
\Map_{({\sSet}_{/N(\mathcal{X})} )^{\mathcal{C}^{\rm op}} }
(\mathbf{Y}(x),\,\mathbf{Y}(y))
\]
are weakly equivalent motivic spaces.
\end{corollary}
\begin{proof}
Applying Lemma~\ref{MYoneda} to the case $f=\mathbf{Y}(y)$, for any
$x$ one obtains a chain of equivalences
\[
\Map_{({\sSet}_{/N(\mathcal{X})} )^{\mathcal{C}^{\rm op}} }
(\mathbf{Y}(x),\,\mathbf{Y}(y))
\simeq \mathbf{Y}(y)(x) \simeq \Map_{\mathcal{C}}(x,\,y).
\]
\qed
\end{proof}

\subsection{Motivic presentable $\infty$-categories and motivic
$\infty$-topoi}
Let $\widehat{\Cat{\infty}}$ denote the very large $\infty$-category of
large $\infty$-categories, and let $\widehat{\MCat{\infty}}$ denote the
very large $\infty$-category of large motivic $\infty$-categories.  Let
${}^{\rm L}\mathrm{Pr}$ denote the subcategory of
$\widehat{\Cat{\infty}}$ spanned by presentable $\infty$-categories and
colimit-preserving functors.  We define the very large $\infty$-category
${}^{\rm L}\mathrm{MPr}$ of presentable motivic $\infty$-categories by
the pullback
\[
\xymatrix@1{ {}^{\rm L}\mathrm{MPr} \ar[d] \ar[r] &
\Fun(\mathcal{X}^{\rm op},\,{}^{\rm L}\mathrm{Pr}) \ar[d] \\
\widehat{\MCat{\infty}} \ar[r] &
\Fun(\mathcal{X}^{\rm op},\,\widehat{\Cat{\infty}}). }
\]

\begin{proposition}
\label{UniPre}
Let $F:\mathcal{C} \to \mathcal{D}$ be a functor of motivic
$\infty$-categories.  Assume that $\mathcal{C}$ is small and that
$\mathcal{D}$ is presentable.  Let $y: \mathcal{C} \to
\Fun(\mathcal{C}^{\rm op},\, \mathfrak{MS}_\infty)$ denote the motivic
Yoneda embedding.  Then the induced functor
\[
y_*(F): \mathcal{D} \ni d \mapsto
\Map_{\mathcal{D}}(F(-),\, d) \in
\Fun(\mathcal{C}^{\rm op},\, \mathfrak{MS}_\infty)
\]
admits a left adjoint.
\end{proposition}
\begin{proof}
For any $U \in \mathcal{X}$, the presentability of $\mathcal{D}(U)$
implies that the restriction functor
\[
y_*(F(U)): \mathcal{D}(U) \to
\Fun(\mathcal{C}(U)^{\rm op},\,\mathcal{S}_\infty)
\]
admits a left adjoint, namely left Kan extension along $F(U)$.  These
fiberwise left Kan extensions are functorial in $U$ by the universal
property of left Kan extensions, and hence assemble to a left adjoint of
$y_*(F)$ in the motivic functor category. \qed
\end{proof}

Let ${}^{\rm L}\mathrm{Top}$ denote the very large $\infty$-category of
$\infty$-topoi and left-exact colimit-preserving functors.  We define
the very large $\infty$-category ${}^{\rm L}\mathrm{MTop}$ of motivic
$\infty$-topoi by the pullback
\[
\xymatrix@1{ {}^{\rm L}\mathrm{MTop} \ar[d] \ar[r] &
\Fun(\mathcal{X}^{\rm op},\,{}^{\rm L}\mathrm{Top}) \ar[d] \\
\widehat{\MCat{\infty}} \ar[r] &
\Fun(\mathcal{X}^{\rm op},\,\widehat{\Cat{\infty}}). }
\]

\begin{proposition}
\label{MTopUniversalColimits}
Let $\mathcal{T}$ be a motivic $\infty$-topos.  Then colimits in
$\mathcal{T}$ are universal fiberwise.  More precisely, for any
$U\in\mathcal{X}$, any morphism $x\to y$ in $\mathcal{T}(U)$, and
any small diagram $D:K\to\mathcal{T}(U)_{/y}$, the canonical map
\[
 \left(\operatorname*{colim}_{k\in K}D_k\right)\times_y x
 \longrightarrow
 \operatorname*{colim}_{k\in K}(D_k\times_y x)
\]
is an equivalence.  Moreover, for any morphism $f:U\to V$ in
$\mathcal{X}$, the restriction functor
$\mathcal{T}(f):\mathcal{T}(V)\to\mathcal{T}(U)$ preserves these
universal colimits.
\end{proposition}
\begin{proof}
By definition, $\mathcal{T}(U)$ is an $\infty$-topos for any
$U\in\mathcal{X}$.  Hence colimits in $\mathcal{T}(U)$ are universal by
the $\infty$-categorical Giraud theorem~\cite[Theorem 6.1.0.6]{HT}.
The transition functors of a motivic $\infty$-topos are left exact and
colimit-preserving, so they preserve both pullbacks and colimits.  This
proves the compatibility with motivic restriction functors. \qed
\end{proof}

\begin{remark}
The preceding proposition separates two notions that are often related
in motivic examples.  Let $G$ be a group object of $\mathcal{T}(U)$ and
let $P\to X$ be a $G$-torsor.  For any morphism $Y\to X$, the pullback
$P\times_XY\to Y$ is again a torsor, now under the pulled-back group
$G_Y=G\times Y$.  This is a formal base-change property: the torsor
condition is expressed by an effective epimorphism together with the
equivalence $G_X\times_XP\simeq P\times_XP$, where $G_X=G\times X$,
and these conditions are preserved by
pullbacks in an $\infty$-topos.  This should not be confused with
$\A^1$-invariance of the classification of $G$-torsors.  The latter is a
separate motivic representability condition, familiar from
Morel--Voevodsky theory and from affine representability results for
vector bundles and principal bundles~\cite{MV,MR3679884,MR3748687}.
\end{remark}

\begin{proposition}
\label{presheaf}
Let $\mathcal{C}$ be a small motivic $\infty$-category.  Assume that,
for any morphism $f:U \to V$ in $\mathcal{X}$, the induced functor
$\mathcal{C}(f):\mathcal{C}(V) \to \mathcal{C}(U)$ is left exact.  Then
the motivic $\infty$-category
\[
\Fun_{\widehat{\MCat{\infty}}}(\mathcal{C}^{\rm op},\,
\mathfrak{MS}_\infty)
\]
is a motivic $\infty$-topos.
\end{proposition}
\begin{proof}
For any $U \in \mathcal{X}$, the fiber
\[
\Fun_{\widehat{\Cat{\infty}}}(\mathcal{C}(U)^{\rm op},\,
\mathfrak{MS}_\infty(U)) \simeq
\Fun_{\widehat{\Cat{\infty}}}(\mathcal{C}(U)^{\rm op},\,
\mathcal{S}_\infty)
\]
is an $\infty$-topos.  The left exactness assumption on the transition
functors $\mathcal{C}(f)$ implies, by
\cite[p.324, Theorem 5.1.5.6 and p.559, Proposition 6.1.5.2]{HT}, that
the induced transition functors between these presheaf $\infty$-topoi
are left exact and preserve colimits.  Hence the displayed motivic
$\infty$-category is an object of ${}^{\rm L}\mathrm{MTop}$. \qed
\end{proof}

\section{Motivic derived algebraic geometry}
\label{sec:MDAG}

\subsection{Conventions on $\infty$-bicategories}
We use Lurie's model of $\infty$-bicategories by scaled simplicial
sets~\cite{GI}.  A scaled simplicial set is a pair
$\overline{X}=(X,\,T)$ consisting of a simplicial set $X$ and a
collection of thin $2$-simplices containing all degenerate
$2$-simplices; we write $\sSet^{\rm sc}$ for the category of scaled
simplicial sets.  Let $\sSet^+$ denote the category of marked simplicial
sets with the Cartesian model structure, and let $\sCat^+$ denote the
category of $\sSet^+$-enriched categories with the model structure of
\cite[Definition A.3.2.4]{HT}.  We use the scaled nerve adjunction
\[
  \mathfrak{C}^{\rm sc}: \sSet^{\rm sc} \rightleftarrows \sCat^+ :N^{\rm sc}
\]
constructed in~\cite[Definition 3.1.10]{GI}.  We also use the scaled
model structure on $\sSet^{\rm sc}$ and Lurie's scaled straightening and
unstraightening theorem, recalled below only in the forms needed for our
motivic constructions.

\subsection{Scaled straightening and unstraightening}
We recall the definition of the scaled straightening and unstraightening
functors from~\cite[Section 3]{GI}.  Let $\overline{S}=(S,\,T)$ be a
scaled simplicial set, and let $\mathcal{C}$ be a $\sSet^+$-enriched
category.  Given a functor
$\phi: \mathfrak{C}^{\rm sc}[\overline{S}] \to \mathcal{C}$, we define
the scaled straightening functor
$\mathrm{St}_\phi^{\rm sc}:\Set{\Delta/\overline{S}}^+ \to
(\sSet^+)^\mathcal{C}$ as follows.

\begin{definition}[\cite{GI} p.114, Definition 3.5.1]
Let $\overline{X}=(X,\,M)$ be a marked simplicial set.  Let
$T \subset X \times \Delta^1$ be the collection of all $2$-simplices
$\sigma$ with the following properties:
\begin{itemize}
  \item Under the projection $X \times \Delta^1 \to X$, the image of
        $\sigma$ is a degenerate $2$-simplex of $X$.
  \item For any $2$-simplex
        $\pi:\Delta^2 \overset{\sigma}\to X \times \Delta^1 \to
        \Delta^1$ satisfying $\pi^{-1}(\{0\}) = \Delta^{0,\,1}$, the
        restriction $\sigma|_{\Delta^{0,\,1}}$ determines a marked edge
        of $X$.
\end{itemize}
We define a scaled simplicial set $C(\overline{X})$ by
\[
 C(\overline{X}) = (X \times \Delta^1) \coprod_{(X \times
 \{0\})_\flat} \{ v \}_\flat.
\]
We call $C(\overline{X})$ the {\it scaled cone} of $\overline{X}$.
More generally, for any scaled simplicial set $\overline{S}$ and any
$\overline{X} \in (\sSet^{\rm sc})_{/\overline{S}}$, we set
$C_{\overline{S}}(\overline{X}) = C(\overline{X}) \coprod_{(X \times
\{1\})_\flat} \overline{S}$.  We call $C_{\overline{S}}(\overline{X})$
the {\it scaled cone} of $\overline{X}$ over $\overline{S}$.
\end{definition}

\begin{definition}[\cite{GI} p.115, Definition 3.5.4]
Let $\overline{S}$ be a scaled simplicial set, let $\mathcal{C}$ be a
$\sSet^+$-enriched category, and let
$\phi: \mathfrak{C}^{\rm sc}[\overline{S}] \to \mathcal{C}$ be a functor
of $\sSet^+$-enriched categories.  The {\it scaled straightening functor}
associated with $\phi$ is the functor
\[
\mathrm{St}_\phi^{\rm sc}: \Set{\Delta/\overline{S}}^+ \to
(\sSet^+)^\mathcal{C}
\]
defined on $\overline{X}$ by
\[
 (\mathrm{St}^{\rm sc}_\phi(\overline{X}))(C)=
 \Map_{ C_{\overline{S}}[\overline{X}]
\coprod_{\mathfrak{C}^{\rm sc}[\overline{S}]} \mathcal{C} }
 (v,\, C),
\]
for any $C \in \mathcal{C}$.
\end{definition}

\begin{remark}
The straightening functor
$\mathrm{St}^{\rm sc}_\phi:\Set{\Delta/\overline{S}}^+ \to
(\sSet^+)^\mathcal{C}$ is defined as a $\sSet^+$-enriched categorical
colimit.  Let $f:X \to S$ be a marked simplicial set over $S$.  Then the
straightening $\mathrm{St}^{\rm sc}_\phi(\overline{X})$ is equivalent to
the colimit of the diagram
\[
   j^{\rm op}\circ \phi \circ  \mathfrak{C}^{\rm sc}[F]: \mathfrak{C}^{\rm sc}
[ (X \times \Delta^1,\,T )\coprod_{X \times \{1\} }\overline{S} ] \to \mathfrak{C}^{\rm sc}[\overline{S}] \to \mathcal{C} \to (\sSet^+)^\mathcal{C},
\]
where $F$ is induced by $f$ and
$j: \mathcal{C} \to (\sSet^{+})^{\mathcal{C}^{\rm op}}$ denotes the
enriched Yoneda embedding.
\end{remark}

Since the scaled straightening functor $\mathrm{St}^{\rm sc}_\phi$
preserves all small colimits, it has a right adjoint by the adjoint
functor theorem; we denote this right adjoint by
$\mathrm{Un}^{\rm sc}_\phi$.  We call $\mathrm{Un}_\phi^{\rm sc}$ the
{\it scaled unstraightening functor} associated with $\phi$.  It is known
that the adjunction
$(\mathrm{St}^{\rm sc}_\phi,\, \mathrm{Un}_\phi^{\rm sc})$ is a Quillen
adjunction.  Here the model structure on $\Set{\Delta/\overline{S}}^+$
is the locally coCartesian model structure~\cite[p.74, Example 3.2.9]{GI},
and the model structure on $(\sSet^+)^{\mathcal{C}}$ is the projective
model structure~\cite[pp.823--824, Definition A.2.8.1 and Proposition
A.2.8.2]{HT}.

We recall the scaled straightening and unstraightening theorem.
\begin{theorem}[\cite{GI} p.128, Theorem 3.8.1]
\label{StUnst}
Let $\overline{S}$ be a scaled simplicial set, let $\mathcal{C}$ be a
$\sSet^+$-enriched category, and let
$\phi: \mathfrak{C}^{\rm sc}[\overline{S}] \to \mathcal{C}$ be a weak
equivalence of $\sSet^+$-enriched categories.  Then the Quillen adjunction
\[
 \mathrm{St}_\phi^{\rm sc}: \Set{\Delta/\overline{S}}^+
 \rightleftarrows
(\sSet^+)^{\mathcal{C}}:\mathrm{Un}_\phi^{\rm sc}
\]
is a Quillen equivalence. \qed
\end{theorem}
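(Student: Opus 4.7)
The plan is to follow the standard paradigm for straightening--unstraightening equivalences pioneered in \cite[Theorem 2.2.1.2]{HT} and extended by Lurie to the scaled setting in \cite[Section 3.8]{LG}. I would first reduce to the case where $\mathcal{C} = \mathfrak{C}^{\rm sc}[\overline{S}]$ and $\phi$ is the identity. Because $\phi$ is a weak equivalence in $\sCat^+$, restriction along $\phi$ yields a right Quillen equivalence between the projective model structures on $(\sSet^+)^{\mathcal{C}}$ and $(\sSet^+)^{\mathfrak{C}^{\rm sc}[\overline{S}]}$, so by two-out-of-three we may replace $\phi$ with the identity. Next, I would establish a stability property in the scaled base: if $f : \overline{S} \to \overline{S}'$ is a weak equivalence of scaled simplicial sets and the theorem holds for $\overline{S}'$, then it holds for $\overline{S}$, using the compatibility of $\mathrm{St}^{\rm sc}$ with base change along $f$ together with another application of two-out-of-three.

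The third step is a cell-attachment reduction. Every scaled simplicial set is built from scaled simplices $\Delta^n$ by transfinite composition of pushouts along generating cofibrations of $\sSet^{\rm sc}$, so it suffices to check that the class of $\overline{S}$ for which the theorem holds (i) is closed under filtered colimits, (ii) is closed under pushouts along generating cofibrations, and (iii) contains every scaled simplex. Closure under colimits follows because $\mathrm{St}^{\rm sc}$ is a left Quillen functor and preserves homotopy colimits, combined with left properness of $\sSet^+$. The base cases $\overline{S} = \Delta^n$ are handled by induction on $n$ via the join decomposition $\Delta^n \simeq \Delta^{n-1} \star \Delta^0$, matching the output of $\mathrm{St}^{\rm sc}_{\mathrm{id}}$ against the enriched Yoneda embedding and reducing to the unmarked and marked straightening theorems already available in \cite{HT}.

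The main obstacle I expect lies in step three: showing that $\mathrm{St}^{\rm sc}_{\mathrm{id}}$ sends pushouts along generating cofibrations in $\sSet^{\rm sc}$ to homotopy pushouts in the projective model structure on $(\sSet^+)^{\mathfrak{C}^{\rm sc}[\overline{S}]}$. The scaled cone $C_{\overline{S}}(\overline{X})$ does not preserve cofibrations on the nose, and one must identify a cofibrant replacement that correctly tracks both the marked edges and the thin $2$-simplices through the enriched Yoneda embedding. Verifying this compatibility is precisely where the interaction between scaling and marking becomes delicate, and it is the technical heart of the argument one would need to adapt from \cite[Section 3.8]{LG}.
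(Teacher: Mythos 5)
First, a point of reference: the paper does not prove Theorem~\ref{StUnst} at all --- it is quoted from \cite{LG} (Theorem 3.8.1) and closed without argument, so the only thing to compare your proposal against is Lurie's own proof in \cite[Section 3.8]{LG}. Measured against that, your outline has the right skeleton: reduction to $\phi=\mathrm{id}$ (restriction along a weak equivalence of $\sSet^+$-enriched categories is a right Quillen equivalence of projective model structures because $\sSet^+$ is an excellent model category, cf.\ \cite[Proposition A.3.3.8]{HT}), invariance of the statement under bicategorical equivalences of the base, a cell-induction over $\overline{S}$, and a simplex base case. This is exactly the architecture of the unscaled and marked prototypes in \cite[Sections 2.2.3 and 3.2.1]{HT} and of \cite[Section 3.8]{LG}.

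As a proof, however, the proposal has a genuine gap, and you flag it yourself: steps (ii) and (iii) are where essentially all of the content lives, and they are only gestured at. Two specific points. First, ``closure under pushouts along generating cofibrations'' is not a formal consequence of $\mathrm{St}^{\rm sc}$ being left Quillen, because as $\overline{S}$ varies \emph{both} sides of the adjunction $\mathrm{St}^{\rm sc}_\phi:\Set{\Delta/\overline{S}}^+\rightleftarrows(\sSet^+)^{\mathcal{C}}$ change; one needs the base-change squares comparing the slice categories over the pieces of a pushout and a gluing argument for Quillen equivalences along a homotopy pushout of model categories (a left Quillen presheaf argument), not merely left properness of $\sSet^+$. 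Second, the base case is not exhausted by the flat simplices $\Delta^n_\flat$: the generating cofibrations of $\sSet^{\rm sc}$ include the scaling map from $\Delta^2$ with only degenerate triangles thin to $\Delta^2$ with all triangles thin, so you must also verify that $\mathrm{St}^{\rm sc}$ carries this map to the corresponding localization on the functor-category side. This is precisely where the thin $2$-simplices interact with the markings, where Lurie's scaled argument genuinely departs from \cite[Section 3.2]{HT}, and it is untouched by the join decomposition $\Delta^n\simeq\Delta^{n-1}\star\Delta^0$, which controls only the underlying simplices and not the scaling. Until those two verifications are supplied, what you have is a correct roadmap to \cite[Section 3.8]{LG} rather than a proof.
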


\subsection{Definition of $\infty$-bicategories}
We recall the model structure on $\sSet^{\rm sc}$ using the model
structure on $\sCat^+$.
\begin{definition}[\cite{GI} p.115, Definition 3.5.6]
\label{bicat-eq}
Let $f:\overline{X} \to \overline{Y}$ be a morphism of scaled simplicial
sets.  We say that $f$ is a {\it bicategorical equivalence} if the
induced functor
$\mathfrak{C}^{\rm sc}[f]: \mathfrak{C}^{\rm sc}[\overline{X}] \to
\mathfrak{C}^{\rm sc}[\overline{Y}]$ is a weak equivalence of
$\sSet^+$-enriched categories.
\end{definition}

\begin{theorem}[\cite{GI} p.143, Theorem 4.2.7]
\label{Ibi}
Let $\sSet^{\rm sc}$ denote the category of scaled simplicial sets.
Then $\sSet^{\rm sc}$ has a left proper combinatorial model structure
determined by the following data:
\begin{itemize}
 \item[(W)] The weak equivalences in $\sSet^{\rm sc}$ are bicategorical
          equivalences.
 \item[(C)] The cofibrations in $\sSet^{\rm sc}$ are monomorphisms.
 \item[(F)] The fibrations in $\sSet^{\rm sc}$ are the morphisms having
          the right lifting property with respect to all morphisms
          satisfying (W) and (C).
\end{itemize}
Furthermore, the Quillen adjunction
\[
   \mathfrak{C}^{\rm sc} :   \sSet^{\rm sc}   \rightleftarrows   \Cat{\Delta}^+ :  N^{\rm sc}
\]
is a Quillen equivalence.
\qed
\end{theorem}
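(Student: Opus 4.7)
The plan is to apply Jeff Smith's recognition theorem for combinatorial model categories to $\sSet^{\rm sc}$, taking the Quillen pair $(\mathfrak{C}^{\rm sc}, N^{\rm sc})\colon \sSet^{\rm sc} \rightleftarrows \sCat^+$ as the principal input, together with the established model structure on $\sCat^+$ from Definition~\ref{DCat}. First I would verify that $\sSet^{\rm sc}$ is locally presentable: a scaled simplicial set is a simplicial set equipped with an extra subset of its $2$-simplices, a finitary piece of data, so $\sSet^{\rm sc}$ is presentable as a category of presheaves on a suitable finite diagram.

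Next I would identify an explicit set $I$ generating monomorphisms. The natural candidate is the union of the boundary inclusions $(\partial \Delta^n, \emptyset) \hookrightarrow (\Delta^n, \emptyset)$ for $n \ge 0$ (together with their suitably scaled variants for $n=2$), augmented by the scalings $(\Delta^2, \emptyset) \to (\Delta^2, \{\sigma\})$ that mark a single non-degenerate $2$-simplex as thin. For the set $J$ of generating trivial cofibrations, I would take a collection of scaled inner horn inclusions and equivalence-extension maps whose images under $\mathfrak{C}^{\rm sc}$ are weak equivalences in $\sCat^+$. Two-out-of-three for bicategorical equivalences is immediate from the analogous property in $\sCat^+$ applied through $\mathfrak{C}^{\rm sc}$, and accessibility of the class of bicategorical equivalences follows from the fact that $\mathfrak{C}^{\rm sc}$ preserves filtered colimits together with accessibility of the weak equivalences in $\sCat^+$ (the conditions in Definition~\ref{MCat} are accessible). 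The remaining hypothesis of Smith's theorem---that cofibrations which are weak equivalences lie in the weakly saturated class generated by $J$---is then a standard small object argument combined with the adjunction $(\mathfrak{C}^{\rm sc}, N^{\rm sc})$.

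The hardest step will be left properness. Here I must show that the pushout of a bicategorical equivalence along a monomorphism is again a bicategorical equivalence. Since $\mathfrak{C}^{\rm sc}$ is a left adjoint and preserves pushouts, it suffices to prove that $\mathfrak{C}^{\rm sc}$ carries monomorphisms of scaled simplicial sets to cofibrations in $\sCat^+$, and then to invoke left properness of $\sCat^+$ itself. The first assertion is a delicate combinatorial analysis: one must trace the effect of $\mathfrak{C}^{\rm sc}$ on the generating monomorphisms in $I$ and show that the resulting morphisms are built from maps of the form $[1]_X \to [1]_Y$ with $X \to Y$ a cofibration in $\sSet^+$, i.e.\ that they lie in the weakly saturated class of Definition~\ref{DCat}. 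Left properness of $\sCat^+$ itself can then be extracted from the corresponding result for the Bergner model structure on $\sCat$ together with the Cartesian model structure on $\sSet^+$. This combinatorial analysis of $\mathfrak{C}^{\rm sc}$ on cofibrations is where essentially all of the nontrivial work is concentrated.
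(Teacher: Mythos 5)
The paper offers no proof of this statement: it is quoted from \cite{LG} (Theorem 4.2.7) with the proof omitted, so there is nothing internal to compare against. Your sketch follows what is essentially Lurie's own route --- transfer along the adjunction $\mathfrak{C}^{\rm sc}\colon \sSet^{\rm sc}\rightleftarrows \sCat^{+} \,\colon N^{\rm sc}$, Smith's recognition theorem for the combinatorial part, and reduction of left properness to left properness of $\sCat^{+}$ (which holds because every object of $\sSet^{+}$ is cofibrant, cf.\ \cite[Proposition A.3.2.4]{HT}) via the fact that $\mathfrak{C}^{\rm sc}$ carries monomorphisms to cofibrations and preserves pushouts and filtered colimits. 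You are right that this last combinatorial fact is where the real work is concentrated; it is the content of \cite[Proposition 3.1.13]{LG}.

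One point in your write-up would not survive being made precise: the explicit set $J$ of ``scaled inner horn inclusions and equivalence-extension maps.'' Already for the Joyal model structure no explicit set of generating trivial cofibrations is known, and the same is true here; if you commit to that particular $J$ you cannot verify that every trivial cofibration lies in the weakly saturated class it generates. You have also inverted the logic of Smith's theorem: the existence of \emph{some} generating set $J$ is its conclusion, not a hypothesis. What must actually be checked is (i) that the class of bicategorical equivalences is accessible --- your argument via preservation of filtered colimits by $\mathfrak{C}^{\rm sc}$ together with accessibility of the weak equivalences of Definition~\ref{MCat} is the right one; (ii) two-out-of-three, which is immediate; (iii) that maps with the right lifting property against all monomorphisms are bicategorical equivalences --- a condition your sketch does not address, though it follows from a standard section-and-homotopy (retract) argument; and (iv) that trivial cofibrations are stable under pushout and transfinite composition, which is exactly the left-properness-type statement you prove last. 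With those corrections the architecture of your argument is sound and matches the cited source.
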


We call the model structure on $\sSet^{\rm sc}$ in Theorem~\ref{Ibi} the
{\it scaled model structure}.
\begin{definition}[\cite{GI} p.145, Definition 4.2.8]
An $\infty$-bicategory is a fibrant object of $\sSet^{\rm sc}$ with
respect to the scaled model structure.
\end{definition}

\begin{definition}
The scaled nerve of the $\sSet^+$-enriched category $\sSet^+$ is an
$\infty$-bicategory of $\infty$-categories, denoted
$\mathbf{Cat}_\infty$.  Let ${}^{\rm L}\mathbf{Top}$ denote the
subcategory of the $\infty$-bicategory $\mathbf{Cat}_\infty$ spanned by
$\infty$-topoi, whose morphisms are colimit-preserving functors.
\end{definition}

\subsection{Motivic $\infty$-topoi and motivic classifying $\infty$-topoi}

\begin{definition}[{\rm cf.}~\cite{HT} p.369, Definition 5.2.8.8 (Joyal)]
Let $\mathcal{C}$ be a motivic $\infty$-category.  A factorization
system $(S_L,\,S_R)$ is a pair of collections of morphisms of
$\mathcal{C}$ satisfying the following axioms:
\begin{enumerate}
\item The collections $S_L$ and $S_R$ are closed under retracts.
\item The collection $S_L$ is left orthogonal to $S_R$.
\item For any morphism $h:X \to Z$ in $\mathcal{C}$, there are an
object $Y$ of $\mathcal{C}$ and morphisms $f:X \to Y$ and $g:Y \to Z$
such that $h=g \circ f$, $f \in S_L$, and $g \in S_R$.
\end{enumerate}
\end{definition}
Let $\mathcal{T}$ and $\mathcal{T}'$ be motivic $\infty$-topoi.  Let
$\Fun^*(\mathcal{T},\,\mathcal{T}')$ denote the full subcategory of
$\Fun(\mathcal{T},\,\mathcal{T}')$ spanned by functors
$f:\mathcal{T} \to \mathcal{T}'$ that admit geometric left adjoints.  A
motivic classifying $\infty$-topos is a motivic $\infty$-topos
$\mathcal{K}$ such that $\mathcal{K}(U)$ is a classifying
$\infty$-topos for any $U \in \mathcal{X}$, and such that the
transition morphisms $\mathcal{K}(U) \to \mathcal{K}(V)$ are compatible
with the geometric structures for any morphism $V \to U$ in
$\mathcal{X}$.

\begin{definition}[\cite{DAGV} p.27, Definition 1.4.3]
\label{crtop}
Let $\mathcal{K}$ be a motivic $\infty$-topos.  A geometric structure on
$\mathcal{K}$ is a factorization system
$(S_L^\mathcal{T},\, S_R^\mathcal{T})$ on
$\Fun^*(\mathcal{K},\,\mathcal{T})$ that depends functorially on
$\mathcal{T}$.  We call $\mathcal{K}$ a {\it classifying motivic
$\infty$-topos}, and we call a morphism in $S^{\mathcal{T}}_R$ a
{\it local morphism}.  For any classifying motivic $\infty$-topos
$\mathcal{K}$ and any motivic $\infty$-topos $\mathcal{T}$, we let
$\mathrm{Str}^{\rm loc}_{\mathcal{K}}(\mathcal{T})$ denote the subcategory
of $\Fun^*(\mathcal{K},\,\mathcal{T})$ with the same objects and with
morphisms given by local morphisms.  We call an object of
$\mathrm{Str}^{\rm loc}_{\mathcal{K}}(\mathcal{T})$ a
{\it $\mathcal{K}$-structured sheaf} on $\mathcal{T}$.  If a geometric
morphism $f:\mathcal{K} \to \mathcal{K}'$ of classifying motivic
$\infty$-topoi induces, for any motivic $\infty$-topos $\mathcal{T}$,
a pullback functor that carries local morphisms in
$\Fun^*(\mathcal{K}',\,\mathcal{T})$ to local morphisms in
$\Fun^*(\mathcal{K},\,\mathcal{T})$, then we say that {\it $f$ is
compatible with the geometric structures}.
\end{definition}

The scaled straightening and unstraightening adjunction
\[
   \mathrm{St}^{\rm sc}:
  ( \Set{\Delta}^+)_{/N(\mathcal{X})\times  {}^{\rm L} \mathbf{Top}^{\rm op}  }
   \rightleftarrows
  ( \Set{\Delta}^+)^{ \mathcal{X}^{\rm op} \times    \mathfrak{C}^{\rm sc}\left[
  {}^{\rm L} \mathbf{Top}  \right]}
   :\mathrm{Un}^{\rm sc}
\]
induces a Quillen equivalence
\[
   \Mot{\mathcal{X}}{I}(\mathrm{St}^{\rm sc}):
   (\Set{\Delta})^+_{/ N(\mathcal{X})\times  {}^{\rm L} \mathbf{Top}^{\rm op}  }
   \rightleftarrows
   \Mot{\mathcal{X}}{I}((\Set{\Delta}^+)^{\mathfrak{C}^{\rm sc}\left[{}^{\rm L} \mathbf{Top}      \right]})
   :\Mot{\mathcal{X}}{I}(\mathrm{Un}^{\rm sc})
\]
of left proper combinatorial simplicial model categories.
Let
\[
 \LMTop(\mathcal{K}): \mathcal{X} \ni U
 \mapsto \LMTop(\mathcal{K}(U)) \in
 N((\Set{\Delta}^+)^\circ)_{/{}^{\rm L}\mathbf{Top}^{\rm op}}
\]
denote the scaled unstraightening of the $\Cat{\infty}$-valued presheaf
\[
\mathrm{Str}^{\rm loc}_\mathcal{K}:
\mathcal{X}^{\rm op} \ni U \mapsto
\mathrm{Str}^{\rm loc}_{\mathcal{K}(U)}(-) \in
\Fun_{\mathrm{Cat}_{(\infty,\,2)}}
({}^{\rm L}\mathbf{Top},\,\mathbf{Cat}_{\infty}).
\]

Furthermore, the motivic Yoneda functor
\[
\Fun^*(\mathcal{K},\,- ):
N(\mathcal{X}) \times {}^{\rm L}\mathbf{Top} \to
\widehat{\mathbf{Cat}_{\infty}}
\]
classifies an $\infty$-category $\LMTop_{\mathcal{K}/}$ and a motivic
locally coCartesian fibration
$q:\LMTop_{\mathcal{K}/} \to N(\mathcal{X})\times{}^{\rm L}\mathbf{Top}$.
By an argument analogous to the proof of
\cite[p.610, Proposition 6.3.4.6]{HT}, the $\infty$-category $\RMTop$
admits pullbacks.  In other words, for any geometric morphism
$f:\mathcal{K} \to \mathcal{K}'$, the forgetful functor
$f_* : \RMTop_{/\mathcal{K}} \to \RMTop_{/\mathcal{K}'}$ admits a right
adjoint.  The opposite category of $\LMTop_{\mathcal{K}/}$ is weakly
equivalent to $(\RMTop_{/\mathcal{K}})^{\rm op}$ as a motivic
$\infty$-category.  Hence we have a homotopically commutative diagram of
$\infty$-categories:
\[
 \xymatrix@1{
 \LMTop_{ \mathcal{K} / } \ar[r]<0.5mm>^{f_*}  &
 \LMTop_{\mathcal{K}'/ } \ar[l]<0.5mm>^{f^{-1}}  \\
 \LMTop(\mathcal{K}) \ar[u]  & \LMTop(\mathcal{K}') \ar[l]^{f^{-1}} \ar[u]
}.
\]
We now prove that the lower horizontal functor has a left adjoint.
\begin{theorem}
\label{spec}
Let $f:\mathcal{K} \to \mathcal{K}'$ be a geometric morphism of motivic
classifying $\infty$-topoi compatible with the geometric structures.
Given the commutative diagram
\[
\xymatrix@1{
 \LMTop(\mathcal{K}) \ar[dr]_p & &  \ar[ll]^{f^{-1}}
 \LMTop(\mathcal{K}') \ar[dl]_q \\
 &   N(\mathcal{X})\times {}^{\rm L} \mathbf{Top}  &
}
\]
where $f^{-1}$ is the functor induced by $f$, and where $p$ and $q$ are
motivic locally coCartesian fibrations.  Then
$f^{-1}:\LMTop(\mathcal{K}') \to \LMTop(\mathcal{K})$ admits a left
adjoint relative to $N(\mathcal{X})\times {}^{\rm L}\mathbf{Top}$.
\end{theorem}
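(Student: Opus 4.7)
The plan is to follow Lurie's construction of the spectrum functor \cite[Theorem 2.1.1]{DAG5}, replacing each appeal to ordinary scaled straightening/unstraightening with its motivic version Theorem~\ref{MStUnst}.

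I would first translate both locally coCartesian fibrations $p$ and $q$ into the motivic module category. By Theorem~\ref{MStUnst} the datum of such a fibration over $\BLMTop$ is equivalent to that of a functor in $\Mot(\sSet^+)^{\mathfrak{C}^{\rm sc}[\BLMTop^{\rm op}]}$, namely $\mathcal{X}\mapsto \mathrm{Str}^{\rm loc}_\mathcal{K}(\mathcal{X})$ and $\mathcal{X}\mapsto \mathrm{Str}^{\rm loc}_{\mathcal{K}'}(\mathcal{X})$ respectively, with $f^{-1}$ corresponding to the natural transformation induced by $f$. Producing a left adjoint of $f^{-1}$ relative to $\BLMTop$ is then equivalent to constructing a pointwise left adjoint of this natural transformation which is natural in $\mathcal{X}$.

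For the pointwise construction I would use the factorization system $(S_L^\mathcal{X}, S_R^\mathcal{X})$ encoded in the geometric structure on $\mathcal{K}$. Given $\mathcal{O}\in \mathrm{Str}^{\rm loc}_\mathcal{K}(\mathcal{X})$, the compatibility of $f$ with the geometric structures of $\mathcal{K}$ and $\mathcal{K}'$ ensures that the canonical comparison with $f^{-1}(\tilde{\mathcal{O}})$, for any candidate $\tilde{\mathcal{O}}\in \mathrm{Str}^{\rm loc}_{\mathcal{K}'}(\mathcal{X})$, admits a factorization as an $S_L^\mathcal{X}$-morphism followed by an $S_R^\mathcal{X}$-morphism; the universal target of the $S_L^\mathcal{X}$-factor furnishes the value of the fibrewise left adjoint $L_\mathcal{X}(\mathcal{O})$, and its universal property is immediate from the orthogonality of $S_L^\mathcal{X}$ and $S_R^\mathcal{X}$. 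This is the motivic analogue of Lurie's spectrification construction.

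The principal obstacle is establishing naturality. I must show that the assignment $\mathcal{X}\mapsto L_\mathcal{X}$ is compatible with locally coCartesian transport along geometric morphisms $g:\mathcal{X}\to\mathcal{Y}$ in $\BLMTop$, up to coherent higher homotopies. The functoriality of the factorization systems in $\mathcal{X}$ (required by Definition~\ref{crtop}) and the naturality of the compatibility of $f$ with geometric structures supply the individual homotopies, but the crucial point is that Theorem~\ref{MStUnst} lets me package the entire tower of coherences as a single morphism in $\Mot(\sSet^+)^{\mathfrak{C}^{\rm sc}[\BLMTop^{\rm op}]}$. Unstraightening this morphism then produces the desired left adjoint of $f^{-1}$ relative to $\BLMTop$, together with its unit and counit data.
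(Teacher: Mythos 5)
Your overall strategy is the right one (it is the motivic transcription of \cite[Theorem 2.1.1]{DAG5}), but there are two concrete gaps. First, both your route and the paper's require knowing that $f^{-1}$ carries locally $q$-coCartesian edges to locally $p$-coCartesian edges, and you never verify this. In your setup it is hidden in the very first sentence: under straightening, a functor over $\BLMTop$ between locally coCartesian fibrations corresponds to an honest natural transformation of the classifying functors \emph{only if} it preserves locally coCartesian edges; otherwise one only gets lax data. The paper spends the first half of its proof on exactly this point (identifying a locally $q$-coCartesian edge as $\alpha:\mathcal{O}_\mathcal{X}\to\pi_*\mathcal{O}_\mathcal{X}$ and checking that $f^{-1}(\alpha)$ is again locally coCartesian), and then invokes \cite[Proposition 7.3.2.6]{HA} to reduce the relative adjoint to the existence of fibrewise left adjoints. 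That criterion is precisely what lets one avoid assembling the ``tower of coherent higher homotopies'' by hand; your plan to package the coherences via Theorem~\ref{MStUnst} is not a substitute, since straightening only gives an equivalence of homotopy theories and does not by itself produce the required morphism in $\Mot(\sSet^+)^{\mathfrak{C}^{\rm sc}[\BLMTop^{\rm op}]}$ from fibrewise data.

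Second, your fibrewise construction lands in the wrong category. A morphism $\mathcal{O}\to f^{-1}(\tilde{\mathcal{O}})=\tilde{\mathcal{O}}\circ f$ lives in $\Fun^*(\mathcal{K},\mathcal{X})$, so factoring it with the factorization system $(S_L^{\mathcal{X}},S_R^{\mathcal{X}})$ on $\Fun^*(\mathcal{K},\mathcal{X})$ yields an intermediate object that is again a $\mathcal{K}$-structure, whereas the value $L_\mathcal{X}(\mathcal{O})$ of the left adjoint must be an object of $\mathrm{Str}^{\rm loc}_{\mathcal{K}'}(\mathcal{X})$. The missing step — which is the heart of the paper's argument — is to first form the left Kan extension $f_*\mathcal{O}_\mathcal{X}:\mathcal{K}'\to\mathcal{X}$ along $f$, transporting $\mathcal{O}_\mathcal{X}$ to a $\mathcal{K}'$-prestructure, and then apply the factorization system on $\Fun^*(\mathcal{K}',\mathcal{X})$ to the adjoint map $f_*(\mathcal{O}_\mathcal{X})\to\mathcal{O}'_\mathcal{X}$; the middle term $\MSpec^{\mathcal{K}}_{\mathcal{K}'}(\mathcal{O}_\mathcal{X})$ of that factorization is the desired universal object. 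Relatedly, ``for any candidate $\tilde{\mathcal{O}}$'' does not define a single object: you need the functoriality of the factorization to extract one universal $L_\mathcal{X}(\mathcal{O})$ together with the unit map whose orthogonality property gives the adjunction.
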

\begin{proof}
The unstraightening functor
\[
   \Mot{\mathcal{X}}{I}(\mathrm{Un}^{\rm sc})  :
   \Mot{\mathcal{X}}{I}((\Set{\Delta}^+)^{\mathfrak{C}^{\rm sc}\left[{}^{\rm L} \mathbf{Top}      \right]}) \to (\Set{\Delta})^+_{/  N(\mathcal{X}) \times   {}^{\rm L} \mathbf{Top}^{\rm op}  }
\]
carries fibrant objects to fibrant objects.  Hence the induced morphism
$f^{-1}:\LMTop(\mathcal{K}') \to \LMTop(\mathcal{K})$ is a morphism
between fibrant objects over
$N(\mathcal{X}) \times {}^{\rm L}\mathbf{Top}^{\rm op}$.  In particular,
$f^{-1}$ carries motivic locally $q$-coCartesian edges to motivic locally
$p$-coCartesian edges.  By \cite[Proposition 7.3.2.6]{HA}, it is
sufficient to prove that the functor
\[
 f^{-1}_{\mathcal{T}}: \mathrm{Str}^{\rm loc}_{\mathcal{K}'}
 (\mathcal{T})  \to  \mathrm{Str}^{\rm loc}_{\mathcal{K}} (\mathcal{T})
\]
admits a left adjoint for any motivic $\infty$-topos $\mathcal{T}$.
Let $\mathcal{O}_\mathcal{T}$ be an object of
$\Fun^*(\mathcal{K},\,\mathcal{T})$, and let $\mathcal{O}'_\mathcal{T}$
be an object of $\Fun^*(\mathcal{K}',\,\mathcal{T})$.  Let
$\phi:\mathcal{O}_\mathcal{T} \to \mathcal{O}'_\mathcal{T} \circ f$ be a
local morphism in $\Fun^*(\mathcal{K},\,\mathcal{T})$.  We obtain a left
Kan extension
$f_*\mathcal{O}_\mathcal{T}: \mathcal{K}' \to \mathcal{T}$ along $f$.
The transformation
$\phi_*: f_*(\mathcal{O}_\mathcal{T}) \to \mathcal{O}'_\mathcal{T}$
induced by $\phi$ gives a functorial factorization
\[
 f_*(\mathcal{O}_\mathcal{T}) \to \MSpec^{\mathcal{K}}_{\mathcal{K}'}
 (\mathcal{O}_\mathcal{T}) \overset{\MSpec(\alpha)}\to
 \mathcal{O}'_\mathcal{T}
\]
where $\MSpec(\alpha)$ is local.  Hence we obtain a functor
\[
 \MSpec^{\mathcal{K}'}_{\mathcal{K},\,\mathcal{T}}:
 f_*(\mathcal{O}_\mathcal{T}) \to \mathcal{O}'_\mathcal{T}
\]
which is a left Kan extension of $f^{-1}_\mathcal{T}$. \qed
\end{proof}

\begin{remark}
In this paper, the motivic $\infty$-category $\LMTop(\mathcal{K})$ is
constructed following~\cite[Remark 1.4.17]{DAGV}.
\end{remark}

\section{The definition of motivic spectral schemes and motivic (Deligne--Mumford) stacks.}
\label{sec:MAG}
Fix a regular Noetherian separated scheme $S$ of finite dimension, and
let $\Sm_S$ be the Nisnevich site of smooth schemes over $S$ with interval
object $\A^1$. We write $\mathfrak{MS}_\infty$ for the motivic
$\infty$-category of motivic spaces and $\mathfrak{MSp}_\infty$ for the
stable motivic $\infty$-category of motivic spectra. Let
$\mathfrak{MSp}_\infty^\omega$ denote the full subcategory of compact
motivic spectra.

Recall that Zariski topos is classified by the factorization system of
local morphisms between local rings, and \'etale topos is classified by
the factorization system of local morphisms between strict Henselian
local rings. We introduce the definition of local ring objects, local
morphisms and strict Henselian local ring objects of a motivic
$\infty$-topos.

\subsubsection{Motivic $\infty$-Zariski topos.}
Let $\mathcal{T}$ be a topos with a final object $\bf{1}$ and
$\mathcal{O}$ a commutative algebra object of $\mathcal{T}$. We say that
$\mathcal{O}$ is {\it local}~\cite[p.13, Definition 2.4]{DAGVII} if the
following conditions are satisfied:
\begin{enumerate}
\item Let $0, \, 1 : \bf{1} \to \mathcal{O}$ denote the additive
      identity and multiplicative identity in $\mathcal{O}$. Then
      $\bf{1} \times_\mathcal{O} \bf{1}$ is an initial object of
      $\mathcal{T}$.
  
\item Let $\mathcal{O}^\times$ be the multiplicative group of
      $\mathcal{O}$ which is given by the pullback square
\[
\xymatrix@1{
 \mathcal{O}^\times \ar[d] \ar[r]^e  & 
 \mathcal{O} \times \mathcal{O} \ar[d]  \\
  \bf{1} \ar[r] & \mathcal{O}, 
}
\]
where $m: \mathcal{O} \times \mathcal{O} \to \mathcal{O}$ is the
  multiplication equipped with $\mathcal{O}$. Then the map \[
  (1-e)\coprod e : \mathcal{O}^\times \coprod \mathcal{O}^\times \to
  \mathcal{O}
\] 
is an effective epimorphism (See \cite[p.531, Remark 6.1.1.5]{HT}). 

Let $\alpha: \mathcal{O} \to \mathcal{O}'$ be a morphism of commutative
      local ring objects of $\mathcal{T}$. We say that $\alpha :
      \mathcal{O} \to \mathcal{O}'$ is a {\it local morphism} if the
      diagram
      \[
      \xymatrix@1{ \mathcal{O}^\times \ar[r]^\alpha \ar[d] &
      \mathcal{O}'^\times \ar[d] \\ \mathcal{O} \ar[r]_\alpha &
      \mathcal{O}' }
	\]
is a pullback square.
\end{enumerate}
  
\begin{definition}[{\rm c.f.}~\cite{DAGVII} p.13, Definition 2.5]
\label{local} Let $\mathcal{O}$ be a commutative ring object of a
motivic $\infty$-topos on $\mathcal{T}$. We say that $\mathcal{O}$ is a
{\it local} if $\pi_0 \mathcal{O}(U)$ is a local ring object of $\mathrm{h}
 \mathcal{T} (U)$ for any $U \in \mathcal{T}$.
\end{definition}  
      
Let $\mathcal{K}^{M}_{disc}$ denote the motivic $\infty$-topos $\Fun(
\CAlg(\mathfrak{MSp}^\omega_\infty)^{\rm op},\,\mathfrak{MS}_\infty)$.
We introduce the classifying Zariski topos on $\mathcal{K}_{\rm
disc}^M$.  The motivic $\infty$-categorical Yoneda functor $y: \mathfrak{MSp}^{\omega}_\infty
\to \Fun ( \mathfrak{MSp}_\infty^{\omega,\,\rm op}
,\,\mathfrak{MS}_\infty)$ induces $\CAlg(y): \CAlg(
\mathfrak{MSp}^{\omega}_\infty) \to \CAlg(\Fun (
\mathfrak{MSp}_\infty^{\omega, \,\rm op}
,\,\mathfrak{MS}_\infty))$. Hence we obtain the canonical functor:
\[
\Fun(\CAlg(\mathfrak{MSp}^\omega_\infty)^{\rm op}, \,
\mathfrak{MS}_\infty ) \to \CAlg( \Fun (
\mathfrak{MSp}_\infty^{\omega,\, \rm op} ,\,\mathfrak{MS}_\infty) ).\]
It
is well-known that this canonical functor induces weak equivalences on
each fiber on $X \in \mathcal{X}$. Therefore the canonical functor is a
weak equivalence of motivic $\infty$-categories.  We let
$\mathcal{K}_{\rm Zar}^M$ denote the subcategory of $\mathcal{K}_{\rm
disc}^M$ whose morphisms are local morphisms. We say that
$\mathcal{K}_{\rm Zar}^M$ is the {\it motivic $\infty$-Zariski topos}.
Write $\mathrm{MSch}= \LMTop(\mathcal{K}_{\rm Zar}^M)^{\rm op}$.  A {\it
motivic scheme} is an object of the motivic $\infty$-category
$\mathrm{MSch}$.  Equivalently a motivic scheme is an $\A^1$-homotopy
invariant spectral scheme-valued Nisnevich-local sheaf on $\Sm_S$.

Write $\mathrm{MAlgSp}= \LMTop(\mathcal{K}_{\text{disc}}^M)^{\rm
op}$. We refer to $\mathrm{MAlgSp}$ as the motivic $\infty$-category of
{\it motivic algebraic spaces}.  Then the
geometric morphism $\mathcal{K}_{\text{disc}}^M \to
\mathcal{K}_{\text{Zar}}^M$ induces the functor
\[
 (-)^{\text{Zar}} : \mathrm{MAlgSp} \to \mathrm{MStk}		    
\]
which admits a left adjoint by Theorem~\ref{spec}.

\subsubsection{Motivic $\infty$-\'etale topoi.}
\begin{definition}[\cite{DAGVII} p.68, Definition 8.1]
\label{stH} Let $\mathcal{T}$ be a topos and $\mathcal{O}_\mathcal{X}$ a
commutative ring object of $\mathcal{T}$. For any finitely generated
algebra $R$, let $\mathrm{Sol}_R(\mathcal{O}_\mathcal{T})$ be an object
of $\mathcal{T}$ defined by
\[
  \mathrm{Sol}_R(\mathcal{O}_\mathcal{T}) : \mathcal{T} \ni U \mapsto
 \Hom_{\rm Ring}(R,\, \Hom_\mathcal{T}(U,\,\mathcal{O}_\mathcal{T}) )
 \in \mathrm{Sets}.
\] 
We say that $\mathcal{O}_\mathcal{T}$ is {\it strictly Henselian}, if
 for any finite collection of \'etale maps $R \to R_\alpha$ which
 induce a faithfully flat map $R \to \prod_\alpha R_\alpha$, the induced
 map
 \[
 \coprod_\alpha \mathrm{Sol}_{R_\alpha}(\mathcal{O}_\mathcal{T})
 \to \mathrm{Sol}_R (\mathcal{O}_\mathcal{T})
 \]  
is an effective epimorphism. 
\end{definition}

\begin{definition}[{\rm c.f.}~\cite{DAGVII} p.68, Definition 8.3]
\label{STH} Let $\mathcal{T}$ be a motivic $\infty$-topos and
$\mathcal{O}_\mathcal{T}$ a commutative algebra object of
$\mathcal{T}$. Then we say that $\mathcal{O}_\mathcal{T}$ is {\it
strictly Henselian} if $\pi_0 \mathcal{O}_\mathcal{T}(U)$ is a strictly
Henselian commutative ring object of the category $\mathrm{h}
\mathcal{T}(U)$ for each $U \in \mathcal{T}$. Let
$\alpha:\mathcal{O}_\mathcal{T} \to \mathcal{O}'_\mathcal{T}$ be a local
morphism of local commutative algebra objects of $\mathcal{T}$. We say
that $\alpha$ is a {\it strict Henselian local} if
$\mathcal{O}_\mathcal{T}$ and $\mathcal{O}'_\mathcal{T}$ are strict
Henselian.
\end{definition}

Let $\mathcal{K}_{\text{\'et}}^M$ denote the subcategory of
$\mathcal{K}_{\rm disc}^M$ whose morphisms are strict Henselian
local. We say that the motivic $\infty$-topos
$\mathcal{K}_{\text{\'et}}^M$ is the {\it motivic $\infty$-\'etale
topos}.  Write $\mathrm{MStk}= \LMTop(\mathcal{K}_{\text{\'et}}^M)^{\rm
op}$.  A {\it motivic stack} is an object of the motivic
$\infty$-category $\mathrm{MStk}$. By a similar argument to the case
of motivic schemes, a motivic stack is a $\A^1$-homotopy invariant
spectral Deligne--Mumford stack-valued Nisnevich-local sheaf. Similarly,
the geometric morphism $\mathcal{K}_{\text{disc}}^M \to
\mathcal{K}_{\text{\'et}}^M$ induces the functor
\[
 (-)^{\text{\'et}} : \mathrm{MAlgSp} \to \mathrm{MStk}		    
\]
which admits a left adjoint by Theorem~\ref{spec}. Let $\mathbb{X}$ be a motivic algebraic space. 
Then we say that $\mathbb{X}^{\text{\'et}}$ is the motivic stack associated to $\mathbb{X}$.

\begin{corollary}
Let $\mathrm{MAlgSp}$ and $\mathrm{MStk}$ denote the motivic $\infty$-categories of motivic algebraic spaces and motivic stacks, respectively. The geometric morphism $\mathcal{K}_{disc}^M \to \mathcal{K}_{\text{\'et}}^M$ induces a pullback functor $(-)^{\text{\'et}} : \mathrm{MAlgSp} \to \mathrm{MStk}$. This functor admits a left adjoint relative to the underlying $\infty$-topos. Consequently, for any motivic algebraic space $\mathbb{X}$, there universally exists an associated motivic stack $\mathbb{X}^{\text{\'et}}$.
\end{corollary}
\begin{proof}
This follows directly from Theorem~\ref{spec} and the definition of the motivic $\infty$-\'etale topos $\mathcal{K}_{\text{\'et}}^M$. The relative left adjoint provides the required universal property of the stackification. 
\end{proof}

\subsubsection*{Acknowledgements}  
The author used Google’s Gemini-pro 3.1 as a conversational research aid for brainstorming and refining mathematical formulations, and OpenAI’s Prism (GPT-5.2) for editorial and expository assistance, including structural refinement. These tools were used to improve the manuscript’s clarity and to indicate where abbreviated arguments warranted fuller exposition. All results, proofs, and final formulations were reviewed, verified, and approved by the author, who bears sole responsibility for the content of this work.
\bibliographystyle{alphadin}

\bibliography{bibkato}
\end{document}